\newtheorem{theorem}{Theorem}
\newtheorem{corollary}{Corollary}
\newtheorem{proposition}{Proposition}
\theoremstyle{definition}
\newtheorem{example}{Example}
\theoremstyle{remark}
\newcommand{\De}{{\Delta}}
\newcommand{\na}{{\nabla}}
\newcommand{\ep}{{\epsilon}}
\newcommand{\di}{{\mathrm{div}}}
\title[On an inclusion of the essential spectrum of Laplacians]
{On an inclusion of the essential spectrum of Laplacians
under non-compact change of metric}
\begin{document}
\author{Jun Masamune}

\email{jum35@psu.edu} 

\keywords
{essential self-adjointness, incomplete manifolds, essential spectrum, perturbations}

\begin{abstract}
It is shown the stability of the essential self-adjointness,
and an inclusion of the essential spectra of Laplacians 
under the change of Riemannian metric on a subset $ K$ of $M$.
The set $K$ may have infinite volume measured 
with the new metric and its completion 
may contain a singular set such as fractal,
to which the metric is not extendable.
\end{abstract}

\date{}
\maketitle{}

\section{Introduction}
Let $(M,g)$ be a connected smooth Riemannian manifold without boundary.
The Laplacian $\Delta$ of $g$ is called essentially self-adjoint if it has the
unique self-adjoint extension $\overline{\Delta}$.
In \cite{Furutani.80}, Furutani showed that if $\De$
with the domain $C^\infty_0(M)$ is essentially self-adjoint and, 
if $g$ is changed on a compact set $K \subset M$ 
to another smooth metric $g'$ on $M$, then
the Laplacian $\De'$ of $g'$ with the domain $C^\infty_0(M)$ is 
essentially self-adjoint; and the essential spectrum are stable under this change.
In particular, the second result
forms a strong contrast to the behavior of the eigenvalues,
since eigenvalues change continuously with the 
perturbation of the metric in a certain way
(see for e.g \cite{BandoUrakawa.83}).

Needless to say, there are many important Riemannian manifolds with singularity,
by which, we mean that $g$ does not extend to the Cauchy boundary
(the difference between the completion of $M$ and $M$); 
such as algebraic varieties, cone manifolds, edge manifolds, 
Riemannian orbifolds.
In general, the analysis on such a singular space is complicated, and
one of the methods to overcome the difficulties is to modify the singularity to 
a simpler one by the perturbation of the Riemannian metric.
The crucial steps in this process is to study the stability 
of the essential self-adjointness of the Laplacian, 
and to understand the behavior of its spectral structure under the perturbation.

Motivated by these facts, we extend Furutani's theorem to more general $K$
so that $K$ is not compact and
its completion $\overline{K}$ includes the singular set.
In this setting, the natural domain $D(\De)$
for the Laplacian is the following:
\begin{equation} \label{equation;1}
\begin{cases}
D(\na) = \{ u \in C^\infty \cap L^2: \na u \in L^2 \}, \\
D(\di) = \{ X \in C^\infty \cap L^2: \di X \in L^2 \}, \\
D(\De) =\{ u \in D(\na): \na u \in D(\di) \}.
\end{cases}
\end{equation}
(We suppress $M$ and the Riemann measure $d\mu_g$
for the sake of simplicity.)
Indeed, if the Cauchy boundary
$\partial_C M$ is almost polar; namely,
\[ {\rm{Cap}} (\partial_C M) = 0, \]
(see Section \ref{Proof of Theorem} for the definition.
See also for e.g.\ \cite{FOT})
then $M$ has negligible boundary \cite{Masamune.05},
and by the Gaffney theorem \cite{Gaffney.55},
$\De$ is essentially self-adjoint.
All through the article, we assume that the Laplacians
have the domain defined in (\ref{equation;1}).
The following is our main result:
%%%%% Thm %%%%%
%%%%% Thm %%%%%
%%%%% Thm %%%%%
\begin{theorem} \label{Th;1}
Let $g$ and $g'$ be Riemannian metrics on $M$ such that
$g=g'$ outside a subset $K$ of $M$. 
If $\De$ is essentially self-adjoint in $L^2$ and the Cauchy boundary
of $K$ with respect to $g'$ is almost polar, 
then $\De'$ is essentially self-adjoint in $L^2(M;d\mu_{g'})$.
Additionally, if there is a function $\chi$ on $M$ satisfying
\begin{equation} \label{cond1}
\na \chi \in L^\infty,\ \De \chi \in L^\infty, \mbox{ and } \chi|_K=1,
\end{equation}
where $\nabla$ is the gradient of $g$, and the inclusion
\begin{equation} \label{cond2}
H^1_0(N;\,d\mu_g) \subset L^2(N;\,d\mu_g) \mbox{ is compact}
\end{equation}
 for some $N \supset N(\mbox{supp}(\chi);\ep)$ with some $\ep>0$,
 where $N(\mbox{supp}(\chi);\ep)$ is the $\ep$-neighborhood 
 of the support of $\chi$, then
 \[ \sigma_{\mbox{ess}}(\overline{\De})  \subset \sigma_{\mbox{ess}}(\overline{\De'}).\]
%In particular, if $K$ is compact, then it follows that
%\begin{equation} \label{cond3}
%\sigma_{\mbox{ess}}(\overline{\De})  = \sigma_{\mbox{ess}}(\overline{\De'}).
%\end{equation}
\end{theorem}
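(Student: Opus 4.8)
The plan is to characterize $\si_{\mathrm{ess}}$ by singular Weyl sequences and to transplant such a sequence from $\overline{\De}$ to $\overline{\De'}$ after localizing it to $M\setminus K$, where $g=g'$ and hence $\De=\De'$ and $d\mu_g=d\mu_{g'}$ agree, by multiplying with $1-\chi$. First I would fix $\lambda\in\si_{\mathrm{ess}}(\overline{\De})$ and, by Weyl's criterion, pick an orthonormal (hence weakly null) sequence $(u_n)\subset D(\De)$ with $\|(\overline{\De}-\lambda)u_n\|\to0$. Testing the form shows $\int|\na u_n|^2=\la\overline{\De}u_n,u_n\ra\le\|(\overline{\De}-\lambda)u_n\|+|\lambda|$, so $(u_n)$ is bounded in $W^{1,2}$. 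Setting $v_n:=(1-\chi)u_n$, the condition $\chi|_K=1$ forces $v_n$ to be supported in $M\setminus K$, on which the two structures coincide; thus $\|v_n\|_{L^2(d\mu_{g'})}=\|v_n\|_{L^2(d\mu_g)}$, $v_n\in D(\De')$, and $\De'v_n=\De v_n$. The product rule then gives
\[ (\overline{\De'}-\lambda)v_n=(1-\chi)(\overline{\De}-\lambda)u_n-(\De\chi)\,u_n+2\la\na\chi,\na u_n\ra, \]
so it remains to drive the two commutator terms to $0$ in $L^2$ and to keep $\|v_n\|$ away from $0$.

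This is where (\ref{cond1})--(\ref{cond2}) enter. I would choose $\psi\in\ssfc$ with $0\le\psi\le1$, $\psi\equiv1$ on $N(\mathrm{supp}(\chi);\ep/2)$, and $\mathrm{supp}(\psi)\subset N$. Since $(u_n)$ is bounded in $W^{1,2}$ and $\psi$ is smooth with bounded gradient, $(\psi u_n)$ is bounded in $H^1_0(N;d\mu_g)$; by the compact embedding (\ref{cond2}) a subsequence converges in $L^2(N)$, and the weak nullity of $u_n$ forces the limit to be $0$, so $\|\psi u_n\|_{L^2}\to0$ and in particular $\|u_n\|_{L^2(\mathrm{supp}(\chi))}\to0$. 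This at once yields $\|(\De\chi)u_n\|\le\|\De\chi\|_\infty\|u_n\|_{L^2(\mathrm{supp}(\chi))}\to0$ and, since $\|\chi u_n\|_{L^2}\to0$, both $\|v_n\|=\|u_n-\chi u_n\|\to1$ and the weak nullity of $v_n$ in $L^2(d\mu_{g'})$.

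The remaining, and in my view decisive, difficulty is the first--order commutator $\la\na\chi,\na u_n\ra$, which the $L^2$--smallness of $u_n$ does not control. To handle it I would establish a localized energy (Caccioppoli) estimate by testing the approximate eigenvalue equation against $\psi^2u_n$:
\[ \int\psi^2|\na u_n|^2=\la(\overline{\De}-\lambda)u_n,\psi^2u_n\ra+\lambda\int\psi^2u_n^2-2\int\psi u_n\la\na\psi,\na u_n\ra. \]
On the right the first term is at most $\|(\overline{\De}-\lambda)u_n\|\,\|u_n\|\to0$, the second equals $\lambda\|\psi u_n\|_{L^2}^2\to0$, and the third is bounded by $2\|\na\psi\|_\infty\|\psi u_n\|_{L^2}\|\na u_n\|_{L^2}\to0$; hence $\int_{\mathrm{supp}(\chi)}|\na u_n|^2\le\int\psi^2|\na u_n|^2\to0$, and therefore $\|\la\na\chi,\na u_n\ra\|\le\|\na\chi\|_\infty\big(\int_{\mathrm{supp}(\chi)}|\na u_n|^2\big)^{1/2}\to0$.

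Combining the three estimates gives $\|(\overline{\De'}-\lambda)v_n\|\to0$, so a subsequence of $(v_n/\|v_n\|)$ is a singular Weyl sequence for $\overline{\De'}$ at $\lambda$, and $\lambda\in\si_{\mathrm{ess}}(\overline{\De'})$; the essential self--adjointness of $\De'$ from the first part of the theorem makes $\overline{\De'}$ the unique self-adjoint realization, so the conclusion is unambiguous. Beyond the Caccioppoli step, the points I would treat most carefully are the membership $v_n\in D(\De')$ across $\p K$ (where $v_n$ vanishes but the metrics are only assumed to match off $K$) and the legitimacy of invoking (\ref{cond2}) when $\mathrm{supp}(\chi)$ is noncompact, so that $H^1_0(N)$ genuinely embeds compactly into $L^2(N)$.
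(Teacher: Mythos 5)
Your treatment of the spectral inclusion is correct and follows essentially the same route as the paper: the same transplanted sequence $(1-\chi)\phi_n$, the same Lipschitz-in-spirit cutoff equal to $1$ on a neighborhood of $\mathrm{supp}(\chi)$ and supported in $N$, the same use of the compact embedding (\ref{cond2}) plus weak nullity to force $\|\phi_n\|_{L^2(\mathrm{supp}(\chi))}\to 0$ along a subsequence, and the same integration-by-parts (Caccioppoli) device to kill the first-order commutator $\la\na\chi,\na\phi_n\ra$ --- indeed your Caccioppoli identity is written out more carefully than the corresponding line in the paper. The two worries you flag yourself are real but minor and are resolved exactly as the paper does: since $\mathrm{supp}(\chi)$ need not be compact you cannot take $\psi\in\ssfc$; instead take $\psi=(1-\hat r/\ep)_+$ with $\hat r$ the distance to $\mathrm{supp}(\chi)$, which is bounded Lipschitz, and note that $\psi\phi_n\in H^1_0(N)$ because the essential self-adjointness of $\De$ gives $H^1_0=H^1$; and $(1-\chi)\phi_n$ lies in the domain of $\De'$ with $\De'[(1-\chi)\phi_n]=\De[(1-\chi)\phi_n]$ because $1-\chi$ vanishes on $K$ and $g=g'$ off $K$.

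The genuine gap is that you prove only half of the theorem. The first assertion --- that $\De'$ is essentially self-adjoint in $L^2(M;d\mu_{g'})$ --- is invoked (``the first part of the theorem'') but never established, and without it $\overline{\De'}$ is not known to be the unique self-adjoint realization, so even the meaning of $\si_{\mathrm{ess}}(\overline{\De'})$ in your conclusion rests on an unproved claim. This part requires a separate argument and is where the hypothesis that the Cauchy boundary of $K$ with respect to $g'$ is almost polar enters; the paper proves it by showing $(M,g')$ has negligible boundary (so Gaffney's theorem applies): given $u\in H^1(M;d\mu_{g'})$, split $u=\psi u+(1-\psi)u$ with $\psi=(1-r)_+$, $r$ the distance to $K$; approximate $(1-\psi)u$, which lives where $g=g'$, by $\ssfc$ functions using $H^1_0=H^1$ for $g$; and approximate $\psi u$ by $(1-e_n)\psi u$, where $e_n$ are equilibrium potentials of shrinking open neighborhoods of the almost polar Cauchy boundary with $\|e_n\|_{H^1(M;d\mu_{g'})}\to 0$. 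You should supply this (or an equivalent) argument for the proof to be complete.
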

A special case of Theorem \ref{Th;1} is
\begin{corollary}[Furutani's  stability result \cite{Furutani.80}] \label{corollary;1}
Let $g$ and $g'$ be Riemannian metrics on $M$ such that
$g=g'$ outside a compact subset $K$ of $M$. 
If $\De$ is essentially self-adjoint in $L^2$ 
then $\De'$ is essentially self-adjoint in $L^2(M;d\mu_{g'})$, and
\begin{equation} \label{cond3}
\sigma_{\mbox{ess}}(\overline{\De})  = \sigma_{\mbox{ess}}(\overline{\De'}).
\end{equation}
\end{corollary}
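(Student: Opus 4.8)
The plan is to deduce the corollary directly from Theorem \ref{Th;1} by applying it twice, once to the pair $(g,g')$ and once to the pair $(g',g)$, and then to invoke the symmetry of the hypotheses in the compact case to upgrade the two resulting inclusions to the equality (\ref{cond3}). First I would verify that, when $K$ is compact, every hypothesis of Theorem \ref{Th;1} is automatically met, so that the first application yields both the essential self-adjointness of $\De'$ and the inclusion $\sigma_{\mbox{ess}}(\overline{\De}) \subset \sigma_{\mbox{ess}}(\overline{\De'})$.

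Three conditions must be checked. For the Cauchy boundary: since $K$ is compact and $g'$ is a genuine smooth metric on all of $M$, every $g'$-Cauchy sequence in $K$ accumulates at a point of $\overline{K} \subset M$, so the completion of $(K,g')$ adds no ideal points and the Cauchy boundary of $K$ with respect to $g'$ is empty, hence trivially almost polar. For (\ref{cond1}) I would choose $\chi \in \ssfc(M)$ with $\chi \equiv 1$ on a neighborhood of $K$, which is possible precisely because $K$ is compact; then $\na\chi$ and $\De\chi$ are smooth with compact support, hence lie in $L^\infty$. For (\ref{cond2}), note that $\mbox{supp}(\chi)$ is compact, so for small $\ep$ the neighborhood $N(\mbox{supp}(\chi);\ep)$ is precompact; taking $N$ to be a precompact, smoothly bounded domain containing it, the Rellich--Kondrachov theorem supplies the required compactness of $H^1_0(N;\,d\mu_g) \hookrightarrow L^2(N;\,d\mu_g)$. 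With all hypotheses in place, Theorem \ref{Th;1} applies and gives the first half of the claim.

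For the reverse inclusion I would run Theorem \ref{Th;1} again with the roles of $g$ and $g'$ interchanged. This is legitimate because the first application has already produced the essential self-adjointness of $\De'$, which is exactly the standing hypothesis needed for the second application; the relation $g'=g$ outside the compact $K$ is symmetric, and the verifications of the emptiness of the Cauchy boundary of $K$ with respect to $g$ and of (\ref{cond1})--(\ref{cond2}) in the $g'$-metric are identical to those above, again using only compactness of $K$ and Rellich--Kondrachov (now in the $g'$-metric). Theorem \ref{Th;1} then yields $\sigma_{\mbox{ess}}(\overline{\De'}) \subset \sigma_{\mbox{ess}}(\overline{\De})$, and combining the two inclusions gives (\ref{cond3}). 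The only genuinely non-automatic point---the main obstacle, such as it is---is the correct chaining of the two applications: the essential self-adjointness of $\De'$ is not assumed but must first be generated by the application to $(g,g')$ before the application to $(g',g)$ can be invoked. Once this ordering is respected, the symmetry of the compact setting does the rest.
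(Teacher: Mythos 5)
Your proposal is correct and follows essentially the same route as the paper: verify that compactness of $K$ makes its Cauchy boundary empty, construct a compactly supported cutoff $\chi$ for (\ref{cond1}), obtain (\ref{cond2}) from compactness of a precompact neighborhood $N$, and apply Theorem \ref{Th;1} symmetrically to both orderings of the metrics to get the two inclusions. The only cosmetic differences are that the paper builds $\chi$ explicitly by mollifying a distance-based cutoff and derives the compact embedding from the Poincar\'e inequality rather than citing Rellich--Kondrachov directly.
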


%%%%%%%%%%%%%%%%%%
%%%%%%%%%%%%%%%%%%
%%%%%%%%%%%%%%%%%%
%%%%%%%%%%%%%%%%%%
%%%%%%%%%%%%%%%%%%
%The last statement is the classical Furutani's  stability result \cite{Furutani.80}.
A typical example of manifolds which satisfies the condition of Theorem \ref{Th;1} is
given as follows:
\begin{corollary}[See Section \ref{Examples}]
Let $M$ be a complete manifold and 
$\Sigma \subset M$, an almost polar compact subset.
If $\Sigma$ is almost polar with respect to a metric $g'$ on $M \setminus \Sigma$
and $g=g'$ outside a compact set $K \subset M$, then the same conclusion
in the theorem holds true.
\end{corollary}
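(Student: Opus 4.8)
The plan is to prove the two assertions separately. For the essential self-adjointness of $\De'$ I would reduce matters to a capacity estimate for the Cauchy boundary $\partial_C M$ measured with $g'$, and then invoke the Gaffney-type criterion quoted in the introduction (almost polar Cauchy boundary $\Rightarrow$ essential self-adjointness, via \cite{Masamune.05,Gaffney.55}). The idea is that, with respect to $g'$, the boundary $\partial_C M$ splits into the part $\partial_C K$ coming from Cauchy sequences that eventually remain in $K$, and a complementary part $\Si$ coming from sequences escaping into $M\setminus K$. On $M\setminus K$ the two metrics coincide, so $\Si$ and its capacity are the same object for $g$ and for $g'$; in particular $\capa_{g'}(\Si)=\capa_g(\Si)$. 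By hypothesis $\capa_{g'}(\partial_C K)=0$, so once $\capa_g(\Si)=0$ is known, countable subadditivity of capacity gives $\capa_{g'}(\partial_C M)=0$, and the cited criterion yields essential self-adjointness of $\De'$. The delicate point is the passage from ``$\De$ is essentially self-adjoint'' to ``$\capa_g(\Si)=0$'', i.e.\ the converse direction of Gaffney's theorem for the domain (\ref{equation;1}); I expect this to be the main obstacle of the first part, to be settled through the characterization of essential self-adjointness by Markovian (capacitary) uniqueness available in the cited works.

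For the spectral inclusion I would manufacture, from a Weyl singular sequence of $\overline{\De}$ at a point $\lambda\in\sigma_{\mbox{ess}}(\overline{\De})$, a Weyl singular sequence of $\overline{\De'}$ at the same $\lambda$. Fix $\{u_n\}\subset D(\overline{\De})$ with $\|u_n\|=1$, $u_n\rightharpoonup 0$ and $\|(\overline{\De}-\lambda)u_n\|\to 0$. From $\|\na u_n\|^2=|\la\De u_n,u_n\ra|\le\|\De u_n\|\,\|u_n\|\le|\lambda|+1$ eventually, the sequence is bounded in $H^1$. Set $w_n=(1-\chi)u_n$. Since $\chi|_K=1$, each $w_n$ is supported in $M\setminus K$, where $g=g'$; hence $w_n\in D(\overline{\De'})$ with $\|w_n\|_{g'}=\|w_n\|_{g}$, $w_n\rightharpoonup 0$ in $L^2(d\mu_{g'})$, and $\De'w_n=\De w_n$ on its support. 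The scheme therefore hinges on showing that $w_n$ is again singular, and for this the perturbation region must not absorb all the mass. Choose $\phi\in\ssfc$ with $\phi\equiv 1$ on $N(\mbox{supp}(\chi);\ep/2)$ and $\mbox{supp}(\phi)\subset N(\mbox{supp}(\chi);\ep)\subset N$. Then $\phi u_n$ is bounded in $H^1_0(N;d\mu_g)$, so by the compact inclusion (\ref{cond2}) a subsequence converges in $L^2$; the weak limit is $0$ because $u_n\rightharpoonup 0$, whence $\phi u_n\to 0$ in $L^2$. Since $\phi\equiv1$ on $\mbox{supp}(\na\chi)\cup\mbox{supp}(\De\chi)$, this gives $\|u_n\|_{L^2(\mbox{supp}(\De\chi))}\to 0$, which controls the zeroth-order term $\|(\De\chi)u_n\|\le\|\De\chi\|_\infty\,\|u_n\|_{L^2(\mbox{supp}(\De\chi))}\to 0$; the term $\|(1-\chi)(\overline{\De}-\lambda)u_n\|\to 0$ is immediate.

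The remaining, and technically hardest, contribution is the first-order term $\la\na\chi,\na u_n\ra$, for which strong $L^2$-convergence of $u_n$ does not obviously suffice. Here I would localize the energy through the identity
\[
\int\phi^2|\na u_n|^2=-\int\phi^2 u_n\,\De u_n-2\int\phi\,u_n\,\la\na\phi,\na u_n\ra,
\]
valid because the boundary is negligible and so integration by parts carries no boundary contribution. The first term is bounded by $\|\phi u_n\|\,\|\phi\,\De u_n\|$, where $\|\phi\,\De u_n\|$ is bounded (as $\De u_n=\lambda u_n+o(1)$) and $\|\phi u_n\|\to 0$; Young's inequality splits the second term into a multiple of $\int\phi^2|\na u_n|^2$, absorbed on the left, plus a multiple of $\int|\na\phi|^2 u_n^2\le C\|u_n\|_{L^2(\mbox{supp}(\na\phi))}^2\to 0$. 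Hence $\|\phi\,\na u_n\|_{L^2}\to 0$, and since $\phi\equiv1$ on $\mbox{supp}(\na\chi)$ this yields $\|\la\na\chi,\na u_n\ra\|_{L^2}\to 0$. Combining the three estimates gives $\|(\overline{\De}-\lambda)w_n\|_{g}\to 0$, hence $\|(\overline{\De'}-\lambda)w_n\|_{g'}\to 0$. Finally $\|w_n\|_{g'}\ge\|u_n\|-\|\chi u_n\|\to 1$ is bounded away from $0$ while $w_n\rightharpoonup 0$ in $L^2(d\mu_{g'})$, so $w_n/\|w_n\|_{g'}$ is a Weyl singular sequence for $\overline{\De'}$ at $\lambda$. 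Therefore $\lambda\in\sigma_{\mbox{ess}}(\overline{\De'})$, which is the asserted inclusion $\sigma_{\mbox{ess}}(\overline{\De})\subset\sigma_{\mbox{ess}}(\overline{\De'})$.
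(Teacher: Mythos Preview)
Your write-up essentially reproves Theorem~\ref{Th;1} rather than proving the Corollary. The Corollary is stated as an \emph{application} of Theorem~\ref{Th;1}; its content is to check that, in the concrete situation ``$M$ complete, $\Sigma$ compact almost polar, $K$ compact'', the hypotheses of Theorem~\ref{Th;1} are met on the manifold $M\setminus\Sigma$. Concretely one must (i) note that $\De$ is essentially self-adjoint because the $g$-Cauchy boundary of $M\setminus\Sigma$ is $\Sigma$, almost polar; (ii) observe that the $g'$-Cauchy boundary of $K\setminus\Sigma$ is almost polar since by hypothesis $\Sigma$ is almost polar for $g'$; (iii) \emph{construct} $\chi$ satisfying (\ref{cond1}) --- take any $\chi\in\ssfc(M)$ with $\chi|_K=1$, which exists because $K$ is compact in the complete manifold $M$; (iv) \emph{verify} (\ref{cond2}) by choosing $N$ relatively compact in $M$ and using that $\Sigma$ is almost polar to get $H^1_0(N\setminus\Sigma;d\mu_g)=H^1_0(N;d\mu_g)$, after which the Rellich embedding on the precompact $N$ gives compactness. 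Steps (iii) and (iv) are the genuine work, and your proposal omits both: you simply assume $\chi$, $N$ and the compact inclusion are available and then rerun the Weyl-sequence argument from the proof of Theorem~\ref{Th;1}. (The paper signals this by the tag ``See Section~\ref{Examples}'': Example~2 carries out exactly the equilibrium-potential argument needed for (iv).)

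Two further issues in your Part~1. First, you reuse the symbol $\Si$ for ``the part of $\partial_C M$ coming from sequences escaping into $M\setminus K$'', which clashes with the $\Sigma$ already fixed in the statement; more importantly, in the present setting this second set is \emph{empty}, since $(M,g)$ is complete and $g=g'$ off the compact $K$, so your ``delicate point'' (inferring $\capa_g=0$ from essential self-adjointness) simply does not arise. Second, the assertion $\capa_{g'}(\cdot)=\capa_g(\cdot)$ for sets in $M\setminus K$ is not justified: capacity is computed with the global $H^1$-norm, which involves integration over $K$ where the metrics differ, so equality of the metrics on $M\setminus K$ alone does not give equality of capacities. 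Your Part~2, on the other hand, is a correct variant of the paper's own proof of Theorem~\ref{Th;1} (your localized energy identity for the first-order term is equivalent to the paper's estimate $\|\na\chi\|_{L^\infty}\sqrt{\|\De\phi_n\|\,\|\phi\phi_n\|}$); but again, it belongs to the proof of the theorem, not of this corollary.
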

We may apply Theorem \ref{Th;1} for singular manifolds:
we change $g$ to $g'$ on a bounded set $K \supset \partial_C M$
so that $g'$ can be extended to the almost polar Cauchy boundary
with respect to $g$,
and conclude that $\De$ is essentially self-adjoint
in $L^2(M; d\mu_g)$ and 
$ \sigma_{\rm{ess}} (\overline{\De}) \subset \sigma_{\rm{ess}} (\overline{\De'}) $.

A sufficient condition for $\partial_C M$ to be almost polar
is that it has Minkowski co-dimension greater than 2 \cite{Masamune.99}
(if the metric of $g$ extends to $\partial_C M$ and $\partial_C M$
is a manifold, then it is almost polar
if $\partial_C M$ has co-dimension 2).

The idea to prove the inclusion of the essential spectrum in Theorem \ref{Th;1} is to
apply Weyl's criteria:
a number $\lambda$ belongs to $\sigma_{ess} (\overline{\Delta})$ if and only if
there is a sequence $\phi_n$ of ``limit-eigenfunctions" of $\overline{\Delta}$
corresponding to $\lambda$
(see Proposition \ref{Weyl's criterion} for details). 
Indeed, we show that if $\chi$ satisfies (\ref{cond1}) and (\ref{cond2}), 
then there is a subsequence $\phi_{n(k)}$ such that $(1-\chi) \phi_{n(k)}$
is a limit-eigenfunction of $\overline{\Delta}'$.

Our results differ from Furutani's original results in the following two points.
In order to explain those differences, let us employ an example.
Let $M$ be
\[
M = K \cup B(1),
\]
where
$K=\{ (x,y,z) \in S^2:  z \ge 0 \}   \setminus (0,0,1)$
and 
$B(r)=\{ (x,y,z) \in \mathbb{R}^3: x^2 + y^2 \le r^2,\  z = 0 \}$.
Namely,  $M$ is the $S^2$ with flat bottom with deleted the north point.
 (To be more precise, we need to smooth the 
intersection of $K$ and $B(1)$ so that $M$ is a smooth
Riemannian manifold.)
Since the Cauchy boundary of $M$ is the north point and it has null capacity, 
the Laplacian $\Delta$ is essentially self-adjoint.
We modify $M$ by the stereographic projection so that
$(M, g')$ is the 2-dimensional Euclidean space $\mathbb{R}^2$.
Since $(M, g')$ is complete, the Cauchy boundary is empty, 
and the Laplacian $\Delta'$ is essentially self-adjoint.
Next, we find the function $\chi$ which satisfies condition (\ref{cond1}) as follows:
\[\chi \in C^\infty_0 (M \setminus B(1/3)) \mbox{ and } \chi=1 \mbox{ on $K$}. \]
%where $B(r)=\{ (x,y,z) \in \mathbb{R}^3: x^2 + y^2 \le r,\  z = 0 \}$.
By letting $\ep=1/3$ and $N=M \setminus B(1/4)$, condition (\ref{cond2}) is satisfied.
Indeed, since the north point has null capacity,
the spectrum of the Laplacian on $M$ consists only of the eigenvalues with finite multiplicity,
whereas $\De'$ has only essential spectrum.
This proves that the inclusion in Theorem  \ref{Th;1} holds.
This example also shows that the assumptions in 
Corollary \ref{corollary;1} is sharp in the
sense that we may not drop the assumption such that $K$ is compact
to obtain (\ref{cond3}); that is, Furutani's stability result.
Indeed, if we modify the metric of $\mathbb{R}^2$ to obtain $M$,
then there is no subset $N$ of $\mathbb{R}^2$ which satisfies condition (\ref{cond2}).

The second difference is that the essential self-adjointness
of $\De$ does not need to imply that of $\De$ restricted to $C^\infty_0(M)$;
for instance, $\Delta$ on $M$ is essentially self-adjoint, 
but $\De$ restricted to $C^\infty_0(M)$
has infinitely many self-adjoint extensions, in particular,
it is not essentially self-adjoint (see for e.g.\ \cite{C}).

We organize the article in the following manner:
In Section \ref{Proof of Theorem}, we prove Theorem \ref{Th;1},
and in Section \ref{Examples}, we present the examples.

%%%%%%%%%%%%%%%%%%%%%%%%%%%%%%%%%%%%%%%%%%
%%%%%%%%%%%%%%%%%%%%%%%%%%%%%%%%%%%%%%%%%%
%%%%%%%%%%%%%%%%%%%%%%%%%%%%%%%%%%%%%%%%%%
%%%%%%%%%%%%%%%%%%%%%%%%%%%%%%%%%%%%%%%%%%
%%%%%%%%%%%%%%%%%%%%%%%%%%%%%%%%%%%%%%%%%%
%%%%%%%%%%%%%%%%%%%%%%%%%%%%%%%%%%%%%%%%%%
\section{Proofs}
\label{Proof of Theorem}
In this section we recall some definitions and prove Theorem \ref{Th;1}
and Corollary \ref{corollary;1}.
For the sake of the simplicity, we often suppress 
the symbols $M$ and $d\mu_g$.

We denote by $(\cdot ,\cdot)$ and $(\cdot ,\cdot)_{1}$
the inner product in $L^2$ and the Sobolev space $H^1$ of order $(1,2)$,
respectively. 
$H^1_0$ is the completion of the set $C^\infty_0$ of smooth functions 
with compact support with respect to the norm
$\| \cdot \|_1 = \sqrt{ (\cdot ,\cdot)_{1} }$.
Let $\mathcal{O}$ be the family of all open subsets of $\overline{M}$.
For $A \in \mathcal{O}$ we define $\mathcal{L}_A 
= \{ u \in H^1: u  \ge 1\ \mu_g\mbox{-a.e.\ on $ M \cap A$} \}$,
\[
\mbox{Cap}(A) =
\begin{cases}
\inf_{u \in \mathcal{L}_A} \| u \|_1, & \mathcal{L}_A \neq \phi, \\
\infty, &\mathcal{L}_A = \phi,
\end{cases}
\]
and
\[ \mbox{Cap}(\partial_C M) = \inf_{A \in \mathcal{O},\ \partial_C M \subset A} \mbox{Cap}(A). \]
We will use 
%%%%%% Proposition %%%%%%%
%%%%%% Proposition %%%%%%%
%%%%%% Proposition %%%%%%%
%%%%%% Proposition %%%%%%%
%%%%%% Proposition %%%%%%%
%%%%%% Proposition %%%%%%%
\begin{proposition}[Lemma 2.1.1 \cite{FOT}]
If $\mathcal{L}_A \neq \phi$ for $A \in \mathcal{O}$, 
there exists a unique element $e_A \in \mathcal{L}_A$ 
called the \emph{equilibrium potential} of $A$ such that\\
$(\rm{i})$ $\| e_A \|^2_1 = \mbox{Cap}(A)$. \\
$(\rm{ii})$ $0 \le e_A \le 1$ $\mu_g$-a.e.\ and $e_A=1$ $\mu_g$-a.e.\ on $A \cap M$. \\
$(\rm{iii})$ If $A,B \in \mathcal O$, $A \subset B$, then $e_A \le e_B$ $\mu_g$-a.e. 
\end{proposition}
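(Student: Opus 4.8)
\emph{Proof proposal.} The plan is to realize $e_A$ as a Hilbert-space projection and then extract the three properties from the uniqueness of the minimizer together with the Markovian (lattice) structure of the form $(\cdot,\cdot)_1$. First I would observe that $\mathcal{L}_A$ is a nonempty (by hypothesis), convex, and closed subset of the Hilbert space $(H^1,\|\cdot\|_1)$. Convexity is immediate from the linearity of the pointwise constraint $u\ge 1$ on $M\cap A$. For closedness, if $u_n\to u$ in $H^1$ then $u_n\to u$ in $L^2$, so a subsequence converges $\mu_g$-a.e.; the inequality $u_n\ge 1$ on $M\cap A$ therefore passes to the limit, and $u\in\mathcal{L}_A$. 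Since $\mathcal{L}_A$ is a nonempty closed convex subset of a Hilbert space, the element of minimal norm exists and is unique by the projection theorem; call it $e_A$. Then $\|e_A\|_1^2$ realizes the infimum defining the capacity, which is exactly (i).

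For (ii), I would use that the unit contraction $Tu=(u\vee 0)\wedge 1$ maps $H^1$ into itself without increasing $\|\cdot\|_1$ (the Markovian property of the Sobolev form) and maps $\mathcal{L}_A$ into itself: on $M\cap A$ one has $u\ge 1$, so $Tu=1\ge 1$ there. Applying $T$ to the minimizer gives $Te_A\in\mathcal{L}_A$ with $\|Te_A\|_1\le\|e_A\|_1$, so by uniqueness $e_A=Te_A$, i.e.\ $0\le e_A\le 1$ $\mu_g$-a.e. Combined with $e_A\ge 1$ on $M\cap A$, this forces $e_A=1$ $\mu_g$-a.e.\ on $A\cap M$.

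For (iii), assume $A\subset B$, so that $\mathcal{L}_B\subset\mathcal{L}_A$. The key point is the lattice identity for the Sobolev form, $\|u\wedge v\|_1^2+\|u\vee v\|_1^2=\|u\|_1^2+\|v\|_1^2$ (for the argument it suffices to know the Markovian inequality $\le$). Taking $u=e_A$ and $v=e_B$, one checks that $e_A\wedge e_B\in\mathcal{L}_A$ and $e_A\vee e_B\in\mathcal{L}_B$, using that $e_B=1$ on $A\cap M$ by (ii) together with $A\cap M\subset B\cap M$. Minimality over $\mathcal{L}_A$ and over $\mathcal{L}_B$ yields $\|e_A\wedge e_B\|_1^2\ge\|e_A\|_1^2$ and $\|e_A\vee e_B\|_1^2\ge\|e_B\|_1^2$; adding these and comparing with the lattice identity forces equality throughout. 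In particular $e_A\wedge e_B$ is again a minimizer over $\mathcal{L}_A$, so by uniqueness $e_A\wedge e_B=e_A$, i.e.\ $e_A\le e_B$ $\mu_g$-a.e.

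The main obstacle is purely the functional-analytic input that the lattice operations $\wedge,\vee$ and the unit contraction $T$ act on $H^1$ without increasing the norm $\|\cdot\|_1$. For smooth functions this is a chain-rule computation, but on $H^1$ it rests on the standard fact that $u\in H^1$ implies $u^+,\,u\wedge 1\in H^1$ with $\nabla u^+=\mathbf{1}_{\{u>0\}}\nabla u$ and the analogous identities; granting this, the pointwise relations $|\nabla(u\wedge v)|^2+|\nabla(u\vee v)|^2=|\nabla u|^2+|\nabla v|^2$ and $(u\wedge v)^2+(u\vee v)^2=u^2+v^2$ integrate to the lattice identity. Everything else is a soft combination of convex projection and the uniqueness of the minimizer.
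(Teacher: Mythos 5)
Your proof is correct: the paper does not prove this proposition itself but simply cites Lemma 2.1.1 of \cite{FOT}, and your argument (projection onto the closed convex set $\mathcal{L}_A$ for existence/uniqueness and (i), the unit contraction for (ii), and the lattice inequality for $u\wedge v$, $u\vee v$ combined with uniqueness of the minimizer for (iii)) is precisely the standard proof given in that reference. The only point worth flagging is a typo in the paper rather than in your argument: the displayed definition of $\mbox{Cap}(A)$ uses $\inf \| u \|_1$ while (i) asserts $\| e_A \|_1^2 = \mbox{Cap}(A)$, so the capacity should be read as the infimum of the squared norm, which is what your proof tacitly (and correctly) uses.
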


%%%% End Proposition %%%%%%%
%%%% End Proposition %%%%%%%
%%%% End Proposition %%%%%%%
%%%% End Proposition %%%%%%%
%%%% End Proposition %%%%%%%

We prove Theorem  \ref{Th;1}. We start from:
\begin{proof}[Proof of the essential self-adjointness of $\De'$]
For arbitrary $u \in H^1(M;d\mu_{g'})$, we have to find $\hat{u}_n \in H^1(M;d\mu_{g'})$
which converges to $u$ in $H^1(M;d\mu_{g'})$.
Indeed, this implies that $(M,g')$ has negligible boundary, and hence,
$\De'$ is essentially self-adjoint by Gaffney theorem \cite{Gaffney.55}.

Since $L^\infty(M) \cap H^1(M;d\mu_{g'})$ is dense in $H^1(M;d\mu_{g'})$,
we may assume that $u \in L^\infty(M)$ without the loss of generality.
Let
\begin{equation} \label{eq;3}
\psi := (1-r)_+,
\end{equation}
where $r$ is the distance from $K$.
The function $\psi \in L^\infty (M)$ enjoys the property:
\[ \psi|_{K}=1 \mbox{ and } \| \na \psi \|_{L^\infty} \le 1. \]
Since
\[
|(1-\psi)u (x)| \le (1 + \| \psi \|_{L^\infty} ) |u(x)| \]
and
\[ | \nabla ((1-\psi)u (x))| \le (1 + \| \psi \|_{L^\infty} ) |\nabla u (x)| + |u (x)|,
\]
for almost every $x \in M$, it follows that $(1-\psi)u \in H^1$.
Recalling that the essential self-adjointness of $\De$ implies
$H^1_0= H^1$ \cite{Masamune.05},
we find $v_n \in C^\infty_0(M \setminus K)$
such that
\begin{equation*}
v_n \to  (1-\psi) u \mbox{ as $n\to\infty$}
\end{equation*}
in $H^1(M \setminus K)$.
Because the Cauchy boundary  $\partial_C M$ of $M$ associated to $g'$
is almost polar, there is a sequence of
 the equilibrium potentials $e_n$ of $O_n \supset \partial_C M$ such that
$\cap_{n>1} O_{n} = \partial_C M$ and
\[ \| e_n \|_{H^1(M;d\mu_{g'})} \to 0 \mbox{ as $n\to\infty$}. \]
Then $u_n := (1-e_n) \psi u \in H^1_0 (M; d\mu_{g'})$ satisfies
\[ u_n \to \psi u  \mbox{ as $n\to\infty$}\]
in $H^1(M; d\mu_{g'})$,
and we get $\hat{u}_n = u_n + v_n \in H^1_0(M; d\mu_{g'})$ such that
\[ \hat{u}_n \to (1-\psi)u + \psi u = u   \mbox{ as $n\to\infty$}\]
in $H^1(M; d\mu_{g'})$.
\end{proof}

Next, we prove the inclusion of the 
essential spectrum and complete the proof of Theorem \ref{Th;1}.
We use the following characterization:
\begin{proposition}[Weyl's criterion] \label{Weyl's criterion}
A number $\lambda$ belongs to the essential spectrum of $\Delta$ if and only if
there is a sequence of orthonormal vectors $\{\phi_n\}$ of $L^2$ such that
\[ \| (\Delta-\lambda) \phi_n \|_{L^2} \to 0 \mbox{ as $n \to \infty$.} \] 
\end{proposition}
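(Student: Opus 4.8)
The plan is to apply the spectral theorem to the self-adjoint operator $\oDe$ (the essential spectrum being a property of the self-adjoint realization) and reduce the statement to the standard characterization of $\sigma_{\mbox{ess}}$ in terms of the ranges of spectral projections. Throughout I write $A=\oDe$ and denote by $E(I)$ the spectral projection attached to a Borel set $I\subset\bbR$, and by $\{E_t\}$ the spectral resolution.

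For the ``only if'' direction, I would use the fact that $\lambda\in\sigma_{\mbox{ess}}(A)$ forces the range of $E((\lambda-\ep,\lambda+\ep))$ to be infinite-dimensional for every $\ep>0$: a point of the essential spectrum is either a non-isolated spectral point or an eigenvalue of infinite multiplicity, and in both cases these local spectral subspaces cannot be finite-dimensional. Then I construct the orthonormal Weyl sequence inductively: having chosen $\phi_1,\dots,\phi_{n-1}$, I pick a unit vector $\phi_n$ in the range of $E((\lambda-1/n,\lambda+1/n))$ orthogonal to $\phi_1,\dots,\phi_{n-1}$, which is possible precisely because that range is infinite-dimensional. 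Since the spectral measure of $\phi_n$ is then supported in $(\lambda-1/n,\lambda+1/n)$, the spectral calculus gives
\[ \| (A-\lambda)\phi_n \|_{L^2}^2 = \int_{\lambda-1/n}^{\lambda+1/n} |t-\lambda|^2 \, d\| E_t \phi_n \|^2 \le \frac{1}{n^2} \to 0, \]
so $\{\phi_n\}$ is the desired sequence.

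For the ``if'' direction, suppose $\{\phi_n\}$ is orthonormal with $\|(A-\lambda)\phi_n\|_{L^2}\to 0$. First, $\lambda\in\sigma(A)$, for otherwise $(A-\lambda)^{-1}$ would be bounded and $1=\|\phi_n\|\le\|(A-\lambda)^{-1}\|\,\|(A-\lambda)\phi_n\|\to 0$, a contradiction. To place $\lambda$ in the \emph{essential} spectrum I argue by contradiction: if $\lambda$ were an isolated eigenvalue of finite multiplicity, there would be $\ep>0$ with $\sigma(A)\cap(\lambda-\ep,\lambda+\ep)=\{\lambda\}$ and $P:=E(\{\lambda\})$ of finite rank. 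Writing $\phi_n=P\phi_n+(1-P)\phi_n$, the weak convergence $\phi_n\to 0$ of an orthonormal sequence together with compactness of the finite-rank projection $P$ gives $P\phi_n\to 0$ in $L^2$. On the complementary subspace the spectrum of $A$ avoids $(\lambda-\ep,\lambda+\ep)$, so $A-\lambda$ is bounded below there by $\ep$; since $(A-\lambda)P\phi_n=0$, this yields $\ep\,\|(1-P)\phi_n\|\le\|(A-\lambda)\phi_n\|\to 0$. Hence $\|\phi_n\|^2=\|P\phi_n\|^2+\|(1-P)\phi_n\|^2\to 0$, contradicting $\|\phi_n\|=1$.

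The main obstacle is the ``only if'' direction, where everything hinges on identifying points of the essential spectrum with infinite-dimensionality of the local spectral subspaces; once that equivalence is in hand, both the inductive selection and the spectral estimate are routine. The ``if'' direction is comparatively soft, its only subtlety being the combined use of weak convergence of the orthonormal sequence and compactness of the finite-rank eigenprojection.
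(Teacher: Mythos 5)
The paper does not actually prove this proposition: Weyl's criterion is stated as a known fact (it is the standard characterization of the essential spectrum of a self-adjoint operator) and is then simply applied in the proof of Theorem \ref{Th;1}. Your argument is the standard spectral-theorem proof and it is correct as written: the inductive selection of $\phi_n$ in $\mathrm{ran}\,E((\lambda-1/n,\lambda+1/n))$ together with the functional-calculus estimate handles the ``only if'' direction, and the decomposition $\phi_n=P\phi_n+(1-P)\phi_n$ with $P=E(\{\lambda\})$ of finite rank, combined with the lower bound $\|(A-\lambda)\psi\|\ge\ep\|\psi\|$ on $\mathrm{ran}(1-P)$ and the weak convergence $\phi_n\rightharpoonup 0$ of an orthonormal sequence, handles the ``if'' direction. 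The one ingredient you invoke without full proof --- that $\lambda\in\sigma_{\mathrm{ess}}(A)$ if and only if $\mathrm{ran}\,E((\lambda-\ep,\lambda+\ep))$ is infinite-dimensional for every $\ep>0$ --- is itself a standard lemma, and your parenthetical justification (non-isolated spectral points force infinitely many disjoint subintervals with nonzero spectral projection; isolated points of the essential spectrum are eigenvalues of infinite multiplicity) is precisely how it is proved, so this is not a gap. In short: you have supplied a complete proof of a statement the paper takes for granted.
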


\begin{proof}[Proof of the inclusion of the 
essential spectrum.]
We assume (\ref{cond1}) and (\ref{cond2}) 
to prove the inclusion of the 
essential spectrum.
Hereafter, we denote $\De=\overline{\De}$ 
and
$\De'=\overline{\De'}$
because of their essential self-adjointness.
Let $\lambda \in \sigma_{\mbox{ess}}(\De)$
and $\phi_n \in D(\Delta)$ such that
\[ (\phi_i, \phi_j) =\delta_{ij}, \]
\begin{equation*}
\| (\De-\lambda) \phi_n \| \to 0 \mbox{ as $n\to\infty$},
\end{equation*}
where $\| \cdot \| = \sqrt{\left( \cdot,\cdot \right) }$.
Let $\chi$ be the function satisfying (\ref{cond1}) and
$\phi$ be the function defined as
\[ \phi = (1-\hat{r}/\ep)_+,\]
where $\hat{r}$ is the distance from the support of $\chi$.
Clearly, we have
\begin{equation} \label{Eq;4}
 \sup_{n>0} \| \phi \phi_n \| < \infty.
 \end{equation}
Moreover, taking into account that 
$\phi_n \in D(\De)$ implies
\[ \| \na \phi_n \|^2 = - (  \phi_n, \De \phi_n ), \]
it follows that
\begin{align*}
\limsup_{n \to \infty} \| \na (\phi \phi_n) \| 
&\le  \| \na \phi \|_{L^\infty} + \limsup_{n \to \infty} \| \na \phi_n\|   \\
&\le  \| \na \phi \|_{L^\infty} + \limsup_{n \to \infty} (\| \De \phi_n \| \|  \phi_n\| )^{1/2}< \infty.
\end{align*}
Hence,
\begin{equation} \label{eq;6}
\limsup_{n \to \infty} \| \phi \phi_n \|_{1} < \infty.
\end{equation}
Now,  specifying $\ep>0$ as in the statement,
by (\ref{eq;6}) and the fact that the embedding $H^1_0 (N) \subset L^2(N)$ is compact,
there exists a subsequence $\phi_{n(k)}$ of $\phi_n$ and $\phi' \in L^2$
such that
\[ \phi \phi_{n(k)} \to \phi' \mbox{ strongly in $L^2$ as $k \to \infty$}. \]
However, if $f \in L^2$, then $f \phi \in L^2$ and
\[ (f, \phi \phi_n) = (f\phi, \phi_n) \to 0 \mbox{ as $n\to\infty$}; \]
hence, $\phi \phi_{n(k)} \to 0$ weakly in $L^2$  as $k \to \infty$.
Because of the uniqueness of the weak-limits,
it follows that $\phi'=0$, and we may assume:
\begin{equation} \label{Eq;phi}
\| \phi \phi_n \| \to 0 \mbox{ as $n\to\infty$}
\end{equation}
without the loss of generality.
Since $\phi=1$ on $\rm{supp}(\chi)$,
\begin{align*}
& \| (\De-\lambda) (\chi \phi_n) \| \\
 & \le \| (\De \chi) \phi \phi_n \| + 2 \| 
(\na \chi, \na \phi_n) \| + \| \chi ( \De - \lambda) \phi_n \| \\
 & \le \| \De \chi \|_{L^\infty} \| \phi \phi_n \| + 2 \| 
\na \chi \|_{L^\infty} \|  \na \phi_n \|_{L^2({\rm{supp}(\chi)})} + 
 \| \chi\|_{L^\infty} \|( \De - \lambda) \phi_n \|.
\end{align*}
The first and third terms in the last line
tend to 0 as $n\to \infty$ because of (\ref{Eq;phi}) and (\ref{Eq;4}).
The second term can be estimated as
\[  \| \na \chi \|_{L^\infty} \|  \na (\phi \phi_n) \|
\le \| \na \chi \|_{L^\infty} 
\sqrt{ \| \De \phi_n \| \| \phi \phi_n \|} \to 0 \mbox{ as $n\to\infty$}. \]
Thus, since $1-\chi=0$ and $g=g'$ on $M \setminus K$,
\[  \| (\De'-\lambda) [(1-\chi) \phi_n] \|_{L^2(M;d\mu_{g'})}  
= \| (\De-\lambda) [(1-\chi) \phi_n] \|  \to 0 \mbox{ as $n\to\infty$}.\]
On the other hand,
\[
\| (1-\chi) \phi_n \|_{L^2(M;d\mu_{g'})} \ge | \| \phi_n \| - \|\chi \phi_n\| | \ge | 1- \| \phi\phi_n \|| \to 1 
\mbox{ as $n\to\infty$},
\]
and we conclude that 
$\lambda \in \sigma_{\mbox{ess}}(\De')$ by Weyl's criterion.
\end{proof}
Finally, we assume that $K$ is compact to prove the 
essential self-adjointness of the Laplacian and the stability of the 
essential spectrum; namely, Corollary \ref{corollary;1}.
\begin{proof}[Proof of Corollary \ref{corollary;1}.]
We will show that 
the Laplacian $\Delta'$ is essentially self-adjoint and that there 
exist the function $\chi$ and the subset $N$ of $M$
which satisfy conditions (\ref{cond1}) and (\ref{cond2}) for each metric $g$ and $g'$.
This will imply
 \[ \sigma_{\mbox{ess}}(\overline{\De}) =\sigma_{\mbox{ess}}(\overline{\De'})\]
by Theorem \ref{Th;1}.

Recall that if $K$ is compact, then its Cauchy boundary is empty so that
the Laplacian $\Delta'$ is essentially self-adjoint.

Since $K$ is compact and $g$ is smooth, there exists $\epsilon_0>0$
such that for any $0 < \epsilon < \epsilon_0$,
the metric $g$ and its higher order (up to 2nd) derivatives 
are bounded on $N=N(K;2\epsilon)=\{ x \in M:d(x,K) < 2\epsilon \}$.
Let
\[
\hat{\chi} (x) = (1 \wedge (2-3\tilde{r} (x) /  \epsilon))_+,
\]
where $\tilde{r}$ is the distance from $K$.
The function $\hat{\chi}$ is 1 on $N(K;\, \epsilon/3)$ and has support
in $N(K;\, 2\epsilon/3)$, and it satisfies
$\| \nabla \hat{\chi} \|_{L^\infty} \le 3/\epsilon$.
However, since $\hat{\chi}$ does not need to be in 
the Sobolev space $H^2$ of order $(2,2)$, we apply the
Friedrichs mollifier $j$ with radius $\delta>0$ for $\hat{\chi}$ to
find the smooth function $\chi=j * \hat{\chi} $.
If $\delta < \epsilon/3$, then $\chi$ satisfies:
\[
\begin{cases}
{\chi}(x)=1 \mbox{ for } x \in K; \\
\mbox{supp}(\chi) \subset N(K;\, 2\epsilon/3); \\ 
\| \nabla {\chi} \|_{L^\infty} \le 3/\epsilon; \\
\Delta \chi \in L^\infty; 
\end{cases}
\]
namely, condition (\ref{cond1}).
On the other hand, since $K$ is compact,  
$N_\ep$ and $N$ are relatively compact in $M$
with sufficiently small $\ep>0$.
Hence, $N$ has finite volume and finite diameter, and the 
Poincar\'e inequality holds on $N$, it follows that
the embedding $H^1_0 (N;\, d\mu_g) \subset L^2(N;\, d\mu_g)$ is compact;
that is, condition (\ref{cond2}).
We obtained the inclusion: 
$\sigma_{\mbox{ess}}(\overline{\De}) \subset \sigma_{\mbox{ess}}(\overline{\De'})$.

This argumentation holds true if we replace $g$ by $g'$ and we arrive at the conclusion.
\end{proof}
%%%%% End Thm %%%%%

\section{Examples} \label{Examples}
In this section, we present examples 
of manifolds for which Theorem \ref{Th;1}
can be applied.

\begin{example}[see \cite{MasamuneRossman.99}]
Let $(M,g)$ be an $m$-dimensional complete Riemannian manifold,
$\Sigma \subset M$ be an $n$-dimensional compact manifold with $m \ge n+2$.
Assume that $M$ has the product structure $M^{m-n} \times M^n$ near $\Sigma$
and $g$ can be diagonalized.
Choose local coordinates
in a neighborhood $K$ of $\Sigma$ so that
\[
g=g_1 \oplus g_2
\]
in $K$, where $g_1$ is a metric on $M^{m- n}$
and $g_2$ is a metric on $M^n$.
Let $g'$ be another smooth metric on $M \setminus \Sigma$
so that 
\[
g' = 
\begin{cases}
f^2 g_1 \oplus g_2, &\mbox{ on $K$}, \\
g, &\mbox{ on $ M \setminus K $}.
\end{cases}
\]

If $m=2$, assume that $f \in L^{2 +\ep}(K; d\mu_g)$ for some $\ep \in (0,\infty)$.

If $m=3$, assume that inf$(f)>0$ and 
$f \in L^{(m(m-2)/2)+\ep} (K; d\mu_g)$ for some $\ep \in (0,\infty)$.

Then the manifold $M \setminus \Sigma$ with metrics $g$ and $g'$
satisfy the assumption of Theorem \ref{Th;1}.
In particular, if $M$ is compact, $\De'$ on ($M \setminus \Sigma, g')$
has discrete spectrum which satisfies the Weyl asymptotic formula \cite{MasamuneRossman.99}.
\end{example}

In the next example, manifold has fractal singularity.
\begin{example}
Let $(M,g)$ be a complete Riemannian manifold with dimension greater than 2.
Let $\Sigma \subset M$ be the Cantor set, $r$ the distance in $M$ from $\Sigma$,
$B(R)$ the $R$-neighborhood of $\Sigma$.
Set
\[g' = f^2 g,\]
where
\[
f(x) = 
\begin{cases}
r^{\ep}, & x \in B=B(1), \\
1, & x \in M \setminus B. 
\end{cases}
\]
It is shown in \cite{Masamune.05} that 
$\Sigma$ is the almost polar Cauchy boundary of $(M \setminus \Sigma; g' )$
if
\[ \ep > { { \ln2 - \ln3 } \over { 2 \ln 3 - \ln 2 } }.\]
The compact inclusion:
\[ H^1_0 (B \setminus \Sigma) \subset L^2(B\setminus \Sigma) \]
can be seen as follows.
By definition, 
$H^1_0 (B \setminus \Sigma) \subset H^1_0 (B)$,
and the inclusion: $H^1_0 (B) \subset L^2(B) 
= L^2(B\setminus \Sigma)$ is compact;
thus, it suffices to show 
\[ H^1_0 (B \setminus \Sigma) \supset H^1_0 (B).\]
Let $ u \in H^1_0 (B)$. Since $L^\infty \cap H^1_0(B) \subset H^1_0(B)$
is dense, we may assume that $u \in L^\infty$ without loss of generality.
Let $e_n$ be the equilibrium potential as in the proof of Theorem \ref{Th;1}.
Then $u_n = u (1-e_n) \in H^1_0 (B \setminus \Sigma)$ and
\[ u_n \to u \mbox { in $H^1_0 (B ; d\mu_g)$}, \]
and hence, $u \in H^1_0 (B \setminus \Sigma)$.
The function $\chi$ can be found as the relative equilibrium potential
of $B(1)$ and $B(2)$ applied the Friedrichs mollifier.
Therefore, 
$M \setminus \Sigma$ together with $g$ and $g'$ 
satisfy the condition of Theorem \ref{Th;1}. 
Let us point out that 
\begin{itemize}
\item $(M \setminus \Sigma,  g' )$ is $C^{1,1}$ and is not smooth, 
but Theorem \ref{Th;1} can be applied to this setting.
\item We can show the compactly imbedding 
$H^1_0(B\setminus \Sigma; d\mu_{g'}) \subset L^2(B;\setminus \Sigma; d\mu_{g'}) $
only for $\ep \ge 0$.
\end{itemize}
\end{example}

In the next example, $K$ has infinite volume with $g'$.
\begin{example}
Let $M$ be a 2-dimensional complete Riemannian manifold.
Delete a point $p \in M$ and set
\[g' = f^2 g,\]
where
\[
f(x) = 
\begin{cases}
r^{-\ep}, & x \in B=B(1), \\
1, & x \in M \setminus B,
\end{cases}
\]
and $r$ is the distance from $p$.
For any $\ep \ge 1$, $(M \setminus \{p\}, g')$ is complete and $\mu_{g'} (B \setminus \{p\}) = \infty$.

More generally, if $M$ is a complete manifold and
$\Sigma \subset M$ is a compact set,
then there is a smooth Riemannian metric
$g'$ on $M \setminus \Sigma$  and a compact set $K \subset M$ such that
$g = g'$ on $M \setminus K$, $(M \setminus \Sigma;g')$ is complete
and there exists a function $\chi$ and a subset $N$ of $M \setminus \Sigma$
satisfying conditions (\ref{cond1}) and  (\ref{cond2}), respectively.
\end{example}

\section{acknowledgments}
The author would like to thank Professor Hajime Urakawa 
for several advises which improved the original manuscript.

\end{document}